\theoremstyle{mdpi}
\newcounter{thm}
\newcounter{ex}
\newcounter{re}
\newtheorem{Theorem}[thm]{Theorem}
\newtheorem{Proposition}[thm]{Proposition}
\newtheorem{Problem}[thm]{Problem}
\newtheorem{Remark}[re]{Remark}
\newtheorem{Definition}[thm]{Definition}
\address{%
$^{1}$ Department of Mathematics, Vidya Academy of Science \& Technology, Thrissur - 680501, Kerala, India.\\ 
$^{2}$ PG \& Research Department of Mathematics, Mary Matha Arts \& Science College,  Mananthavady- 670645, Kerala, India.\\
$^{3}$ Naduvath Mana, Nandikkara, Thrissur-680301, India.}
\abstract{Let $\mathbb{N}_0$ denote the set of all non-negative integers and $X$ be any non-empty subset of $\mathbb{N}_0$. Denote the power set of $X$ by $\mathcal{P}(X)$. An integer additive set-labeling (IASL) of a graph $G$ is an injective set-valued function $f:V(G)\to \mathcal{P}(X)$ such that the induced function $f^+:E(G) \to \mathcal{P}(X)$ is defined by $f^+ (uv) = f(u)+ f(v)$, where $f(u)+f(v)$ is the sumset of $f(u)$ and $f(v)$. If the associated set-valued edge function $f^+$ is also injective, then such an IASL is called an integer additive set-indexer (IASI). An IASL $f$ is said to be a topological IASL (TIASL) if $f(V(G))\cup \{\emptyset\}$ is a topology of the ground set $X$. An IASL is said to be an integer additive set-sequential labeling (IASSL) if $f(V(G))\cup f^+(E(G))= \mathcal{P}(X)-\{\emptyset\}$. An IASL of a given graph $G$ is said to be a topological integer additive set-sequential labeling of $G$, if it is a topological integer additive set-labeling as well as an integer additive set-sequential labeling of $G$. In this paper, we study the conditions required for a graph $G$ to admit this type of IASL and propose some important characteristics of the graphs which admit this type of IASLs. }
\begin{document}

%%%%%%%%%%%%%%%%%%%%%%%%%%%%%%%%%%%%%%%%%%

\section{Introduction}

For all terms and definitions, other than newly defined or specifically mentioned in this paper, we refer to \cite{BM} and \cite{FH} and for different graph classes, we further refer to \cite{BLS} and \cite{JAG}. For the terms and concepts in Topology, we further refer to \cite{KDJ1}, \cite{JRM} and \cite{VK}. Unless mentioned otherwise, the graphs considered in this paper are simple, finite, non-trivial and connected.

Let $A$ and $B$ be two non-empty sets. The {\em sumset} of $A$ and $B$ is denoted by $A+B$ and is defined by $A+B=\{a+b: a\in A, b\in B\}$. If a set $C=A+B$, then $A$ and $B$ are said to the summands of $C$. Using the concepts of sumsets of two sets, the following notion has been introduced.

Let $\mathbb{N}_0$ denote the set of all non-negative integers. Let $X$ be a non-empty subset of $\mathbb{N}_0$ and $\mathcal{P}(X)$ be its power set. An {\em integer additive set-labeling} (IASL) of a graph $G$ is an injective function $f : V (G) \to P(X)$ such that the image of the induced function $f^+: E(G) \to \mathcal{P}(\mathbb{N}_0)$, defined by $f^+(uv)=f(u)+f(v)$ is contained in $\mathcal{P}(X)$, where $f(u) + f(v)$ is the sumset of $f(u)$ and $f(v)$ (see \cite{GA},\cite{GS1}). It is to be noted that for an IASL $f$, we have $f^+(E(G))\subseteq \mathcal{P}(X)$.  A graph that admits an IASL is called an {\em integer additive set-labeled graph} (IASL-graph). An IASL $f$ of a given graph $G$ is said to be an {\em integer additive set-indexer} (IASI) if the associated function $f^+$ is also injective. 

If $X=\mathbb{N}_0$, then it can be easily verified that every graph admits an integer additive set-labeling and an integer additive set-indexer.

The cardinality of the set-label of an element (a vertex or an edge) of a graph $G$ is said to be the {\em set-indexing number} of that element. Since the set-label of every edge of $G$ is the sumset of the set-labels of its end vertices, we do not permit labeling vertices and hence the edges of an IASL-graph $G$ with the empty set. If any of the given two sets is countably infinite, then their sumset is also a countably infinite set. Hence, all sets we consider in this paper are non-empty finite sets of non-negative integers. More studies on different types of integer additive set-labeled graphs have been done in \cite{GS0,GS2,GS11,GS12}.

An IASL $f$ of a graph $G$, with respect to a ground set $X\subset \mathbb{N}_0$, is said to be a {\em topological IASL} (TIASL) of $G$ if $\mathcal{T}=f(V(G))\cup \{\emptyset\}$ is a topology on $X$ (see \cite{GS13}). Certain characteristics and structural properties of TIASL-graphs have been studied in \cite{GS13}.

The following are the major results on topological IASL-graphs obtained in \cite{GS13}.

\begin{Proposition}\label{P-TIASI1}
	{\rm \cite{GS13}} If $f:V(G)\to \mathcal{P}(X)-\{\emptyset\}$ is a TIASL of a graph $G$, then $G$ must have at least one pendant vertex.
\end{Proposition}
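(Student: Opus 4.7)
The plan is to exploit the fact that the full ground set $X$ must itself appear as a vertex-label, then extract a rigid constraint on the neighbors of that distinguished vertex. Concretely, since $\mathcal{T}=f(V(G))\cup\{\emptyset\}$ is a topology on $X$, the ambient space $X$ lies in $\mathcal{T}$; as $X\neq\emptyset$, there must exist a (unique, by injectivity of $f$) vertex $v_0\in V(G)$ with $f(v_0)=X$. I will aim to show that $v_0$ itself is the pendant vertex guaranteed by the proposition.

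Next, I would pick any neighbor $v$ of $v_0$; such a neighbor exists because $G$ is non-trivial and connected. Since $f$ is an IASL, $f^+(vv_0)=f(v)+X$ must lie inside $X$. The crux is then a maximum-element comparison: writing $M=\max f(v)$ and $N=\max X$, the element $M+N$ lies in the sumset $f(v)+X$ and so belongs to $X$, forcing $M+N\le N$ and hence $M=0$. This pins down $f(v)=\{0\}$ (and incidentally shows $0\in X$).

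Finally, by injectivity of $f$ the singleton $\{0\}$ can serve as the label of at most one vertex, so $v_0$ has exactly one neighbor and is therefore a pendant vertex of $G$. The only subtle point, rather than a real obstacle, is appreciating just how restrictive the inclusion $f(v)+X\subseteq X$ is; once one observes that the largest element of the sumset is $M+N$, the identification of $f(v)$ with $\{0\}$, and hence the pendant conclusion, drops out immediately.
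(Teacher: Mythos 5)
Your argument is correct and is essentially the same one the paper relies on: the proposition is quoted from \cite{GS13} without proof here, but Proposition \ref{P-TIASI2} and Remark \ref{R-ITIASL1} encode exactly your reasoning, namely that the vertex labeled $X$ can only be adjacent to the vertex labeled $\{0\}$ (via the max-element comparison forcing any neighbor's label to be $\{0\}$) and is therefore pendant. Your write-up is a clean, complete version of that argument, correctly using injectivity of $f$ and the standing connectedness/non-triviality assumption to conclude.
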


\begin{Proposition}\label{P-TIASI2}
	{\rm \cite{GS13}} Let $f:V(G)\to \mathcal{P}(X)-\{\emptyset\}$ be a TIASL of a graph $G$. Then, the vertices whose set-labels contain the maximal element of the ground set $X$ are pendant vertices, adjacent to the vertex having the set-label $\{0\}$.
\end{Proposition}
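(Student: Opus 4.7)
The plan is to exploit the sumset constraint imposed by the IASL condition along each edge incident with such a vertex, and then conclude via injectivity of $f$. Let $x_{\max} := \max X$ and fix any $v \in V(G)$ with $x_{\max} \in f(v)$. Since $G$ is connected and non-trivial by the standing hypothesis of the paper, $v$ has at least one neighbor, so the argument is not vacuous.

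The core step is a one-line arithmetic observation. Let $u$ be any neighbor of $v$. The IASL condition gives $f^+(uv) = f(u) + f(v) \subseteq X$. Suppose, toward a contradiction, that some $a \in f(u)$ satisfies $a > 0$. Then $a + x_{\max}$ lies in $f(u) + f(v)$ but strictly exceeds $x_{\max}$, which contradicts $f(u)+f(v) \subseteq X$. Hence every element of $f(u)$ equals $0$, and since set-labels are required to be non-empty, $f(u) = \{0\}$. Note that this step uses nothing beyond the IASL axiom; the topological hypothesis on $f$ is invoked only to guarantee that $v$ exists in the setting under consideration.

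To finish, I would appeal to injectivity of $f$. Because $f$ is injective, the label $\{0\}$ can be carried by at most one vertex of $G$; let $v_0$ be that vertex if it exists. By the previous paragraph, every neighbor of $v$ must coincide with $v_0$. Hence $v$ has degree exactly one and is pendant to $v_0$, and as a byproduct the existence of $v_0$ itself is established (since $v$ has at least one neighbor).

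I do not expect a serious obstacle here: the whole proposition reduces to the inequality $a + x_{\max} > x_{\max}$ whenever $a > 0$. The only subtlety worth flagging is that the existence of the vertex labeled $\{0\}$ referenced in the statement is not a separate hypothesis but an immediate consequence of the argument itself, produced the moment one shows that some neighbor of $v$ must receive this label.
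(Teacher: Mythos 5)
The paper itself states this proposition without proof (it is imported from \cite{GS13}), and your argument is the standard one used there: for any neighbor $u$ of $v$, an element $a>0$ of $f(u)$ would force $a+\max X \in f(u)+f(v) \subseteq X$, which is impossible, so $f(u)=\{0\}$, and injectivity of $f$ then collapses all neighbors of $v$ to the single vertex labeled $\{0\}$. Your proof is correct, including the correct observation that the topological hypothesis is needed only to ensure such a vertex $v$ exists (since $X \in f(V(G))$), and that the existence of the vertex labeled $\{0\}$ falls out of the argument rather than being a separate hypothesis.
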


If $0 \in X$, then every subset $A_i$ of $X$ can be written as a sumset as $A_i=\{0\}+A_i$. This sumset representation is known as a trivial sumset representation of $A_i$ and the summands $\{0\}$ and $A_i$ of the set $A_i$ are called trivial summands of $A_i$ in $X$.

Motivated by the studies of set-sequential graphs made in \cite{AGKS} and \cite{AH} and the studies of topological set-sequential graphs made in \cite{AGPR}, we extend our studies on integer additive set-sequential graphs, studied in \cite{GS15}, for which the collection of set-labels of whose elements forms a topology of the ground set $X$.

\section{Topological IASS-Graphs}

Let us first recall the definition of an integer additive set-sequential labeling of a given graph $G$. 

\begin{Definition}{\rm 
	{\rm \cite{GS15}} An integer additive set-labeling $f$ of a graph $G$, with respect to a finite set $X$, is said to be an {\em integer additive set-sequential labeling} (IASSL) of $G$ if there exists an extension function $f^{\ast}:G\to  \mathcal{P}(X)$ of $f$ defined by 
		\begin{equation*}
			f^{\ast}(G)=
			\begin{cases}
				f(x) & \mbox{if} ~ x\in V(G)\\
				f^+(x) & \mbox{if} ~ x\in E(G) 
			\end{cases}
		\end{equation*}
such that $f^*(G)\cup\{\emptyset\}=\mathcal{P}(X)$ .
		
A graph which admits an integer additive set-sequential labeling is called an \textit{integer additive set-sequential} graph (IASS-graph). }
\end{Definition}

We shall now introduce the notion of a topological integer additive set-sequential graphs as follows.

\begin{Definition}{\rm 
	{\rm \cite{GS15}} An integer additive set-labeling $f$ of a graph $G$, with respect to a finite set $X$, is said to be a {\em topological integer additive set-sequential labeling} (TIASSL) if  $f(V)\cup \{\emptyset\}$ is a topology on $X$ and $f^*(G)=\mathcal{P}(X)-\{\emptyset\}$, where $f^*(G)=f(V(G))\cup f^+(E(G))$, as defined in the previous definition. }
\end{Definition}

In other words, a TIASSL of a graph $G$ is an IASI of $G$ which is both a TIASL as well as an IASSL of $G$.

A topological integer additive set-sequential labeling $f:V(G)\to \mathcal{P}(X)$ is said to be a {\em topological integer additive set-sequential indexer} (TIASSI) if the induced function $f^*$ is also injective.

\begin{Remark}\label{R-ITIASL1}{\rm
	Let $f$ be a TIASSL of a given connected graph $G$, with respect to a given ground set $X$. Then, since $f(V(G))\cup \{\emptyset\}$ is a topology of $X$, $X \in f(V)$. Therefore, $\{0\} \in f(V)$ and the vertex having the set-label $X$ can be adjacent only to the vertex having $\{0\}$. Then, the vertex having the set label $X$ is a pendant vertex of $G$.} 
\end{Remark}

An IASL $f:V(G) \to \mathcal{P}(X)$, with respect to a finite ground set $X$, is said to be an  {\em integer additive set-graceful labeling} if the induced edge-function $f^+: E(G)\to \mathcal{P}(X)$ satisfies the condition $f^+(E(G))= \mathcal{P}(X)-\{\emptyset, \{0\}\}$ (see \cite{GS14}).

A {\em topological integer additive set-graceful labeling} (Top-IASGL) with respect to a finite ground set $X$ is defined in \cite{GS16} as an integer additive set-graceful labeling $f$ of a graph $G$ such that $f(V(G))\cup\{\emptyset\}$ is a topology on $X$ (see \cite{GS16}). 

The following result establishes the relation between a TIASGL and a TIASSL of $G$.

\begin{Theorem}\label{T-IASSL2}
	Every TIASGL of a given graph $G$ is a TIASSL of $G$.
\end{Theorem}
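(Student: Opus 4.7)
The plan is to unpack both definitions side by side and verify that every condition required for a TIASSL is already supplied, either directly or as an easy consequence, by the TIASGL hypothesis. A TIASSL of $G$ demands two things: (i) $f(V(G))\cup\{\emptyset\}$ is a topology on $X$, and (ii) $f(V(G))\cup f^+(E(G)) = \mathcal{P}(X)-\{\emptyset\}$. A TIASGL of $G$ gives us (i) verbatim, and gives us $f^+(E(G))=\mathcal{P}(X)-\{\emptyset,\{0\}\}$ from the set-graceful part of its definition. So the whole question reduces to showing that, under the TIASGL hypothesis, the single missing subset $\{0\}$ shows up as some vertex label, i.e.\ $\{0\}\in f(V(G))$.

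To establish $\{0\}\in f(V(G))$, I would invoke Proposition~\ref{P-TIASI2}. Since $f(V(G))\cup\{\emptyset\}$ is a topology on $X$, the ground set $X$ itself must belong to this topology, and because $\emptyset\notin f(V(G))$, there is a vertex $u$ with $f(u)=X$. This vertex carries the maximal element of $X$ in its label, so by Proposition~\ref{P-TIASI2} it is a pendant vertex adjacent to a vertex $v$ with $f(v)=\{0\}$. Hence $\{0\}\in f(V(G))$.

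Combining the pieces, $f(V(G))\cup f^+(E(G))$ contains $\{0\}$ together with every nonempty subset of $X$ other than $\{0\}$, so it equals $\mathcal{P}(X)-\{\emptyset\}$; since no vertex is labelled with the empty set, the reverse inclusion is automatic. Condition (i) is inherited directly from the TIASGL hypothesis. Together these give a TIASSL.

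The only nontrivial step is deducing $\{0\}\in f(V(G))$, and the main obstacle would be if one could not appeal to Proposition~\ref{P-TIASI2}; but since a TIASGL is by construction a TIASL, that proposition applies with no further work, making the remainder of the argument pure set-theoretic bookkeeping.
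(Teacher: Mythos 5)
Your proof is correct, and its overall shape matches the paper's: both reduce the claim to observing that the graceful condition already supplies $f^+(E(G))=\mathcal{P}(X)-\{\emptyset,\{0\}\}$, so that the only thing left to check for the set-sequential condition is that $\{0\}$ appears as a vertex label. The one genuine difference is how that last fact is obtained. The paper simply asserts the identity $f(V(G))-f^+(E(G))=\{\{0\}\}$, which is a property of integer additive set-graceful labelings imported from the companion work \cite{GS14} and not re-derived here. You instead extract $\{0\}\in f(V(G))$ from the topological half of the hypothesis: since $f(V(G))\cup\{\emptyset\}$ is a topology on $X$, some vertex carries the label $X$, and Proposition~\ref{P-TIASI2} then forces the existence of a vertex labeled $\{0\}$. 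Your route has the advantage of being self-contained within the results quoted in this paper and of isolating exactly which hypothesis (the topological one, not the graceful one) delivers the missing set $\{0\}$; the paper's route is shorter but leans on an external fact about IASGLs. Either way the concluding bookkeeping, $f(V(G))\cup f^+(E(G))\supseteq\{\{0\}\}\cup\bigl(\mathcal{P}(X)-\{\emptyset,\{0\}\}\bigr)=\mathcal{P}(X)-\{\emptyset\}$ together with the trivial reverse inclusion, is the same.
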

\begin{proof}
	Let $f$ be a TIASGL of a given graph $G$. Then, $f(V)\cup \{\emptyset\}$ is a topology on $X$ and $f^+(E)= \mathcal{P}(X)-\{\emptyset, \{0\}\}$. Also, we have $f(V(G))-f^+(E(G))=\{0\}$ and $f(V)-\{0\}\subseteq f^+(E(G))$. Therefore, $f(V)\cup f^+(E) = \mathcal{P}(X)-\{\emptyset\}$. Therefore, $f$ is a TIASSL of a graph $G$.
\end{proof}

The converse of this result need not be true. For, the fact $f(V)\cup f^+(E) = \mathcal{P}(X)-\{\emptyset\}$ does not necessarily imply that  $f^+(E)= \mathcal{P}(X)-\{\emptyset, \{0\}\}$. Figure \ref{fig:G-TIASSL1a} is an example of a TIASS-graph which is not a TIASG-graph with respect to the ground set $X$. Indeed, the sets $\{1,2\}$ and $\{0,2\}$ are not the set-labels of any edge of the graph and hence we can see that the subsets  $f^+(E) \ne \mathcal{P}(X)-\{\emptyset, \{0\}\}$.

\begin{figure}[h!]
\centering
\includegraphics[width=0.65\linewidth]{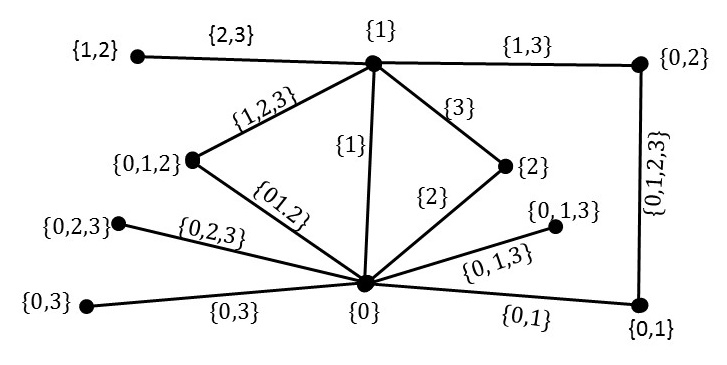}
\caption{An example of an TIASS-graph which is not a TIASG-graph}
\label{fig:G-TIASSL1a}
\end{figure}

By a \textit{graphical realisation} of a collection $\mathcal{P}(X)$ of subsets of a set $X\subseteq \mathbb{N}_0$ with respect to an IASL $f$, we mean a graph $G$ such that all non-empty elements of $\mathcal{P}(X)$ are the set-labels of some elements (edges or vertices) of $G$. 

In view of the above notions and concepts, the following theorem establishes the necessary and sufficient condition for a graph to admit a TIASSL with respect to the discrete topology of a given ground set $X$. 

\begin{Theorem}\label{T-TIASSG3}
	A graph $G$ admits a TIASSL $f$ with respect to the discrete topology of a non-empty finite set $X$ if and only if $G\cong K_{1,2^{|X|}-2}$.
\end{Theorem}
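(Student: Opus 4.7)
For the sufficiency direction ($\Leftarrow$), I would exhibit an explicit TIASSL on $K_{1,2^{|X|}-2}$: label the centre by $\{0\}$ and distribute the remaining $2^{|X|}-2$ non-empty subsets of $X$ over the leaves in any bijection. Then $f(V(G))\cup\{\emptyset\}=\mathcal{P}(X)$ is the discrete topology on $X$, every edge sumset has the form $\{0\}+A=A\in\mathcal{P}(X)$, distinct leaves give distinct sumsets so that $f^+$ is injective, and $f(V(G))\cup f^+(E(G))=\mathcal{P}(X)-\{\emptyset\}$; hence every TIASSL requirement is met.

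For the necessity direction ($\Rightarrow$), assume $G$ carries a TIASSL $f$ for which $f(V(G))\cup\{\emptyset\}$ is the discrete topology on $X$. This forces $f(V(G))=\mathcal{P}(X)-\{\emptyset\}$, so $|V(G)|=2^{|X|}-1$ and each non-empty subset of $X$ labels exactly one vertex. Let $v_0$ be the unique vertex with $f(v_0)=\{0\}$. The goal is to show that every edge of $G$ is incident to $v_0$.

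The heart of the argument is a downward induction on $k\in X\setminus\{0\}$ with the statement: \emph{every vertex $u$ with $\max f(u)=k$ is pendant and adjacent to $v_0$}. The base case $k=\max X$ is exactly Proposition~\ref{P-TIASI2}. For the inductive step, suppose the claim holds for every value strictly larger than $k$, let $u$ be a vertex with $\max f(u)=k$, and suppose for contradiction that $u$ is adjacent to some $v$ with $f(v)=B\ne\{0\}$. Put $A=f(u)$ and $C=A+B$. Since $\max B\ge 1$, we have $\max C=k+\max B>k$, and the inductive hypothesis says the unique vertex $w$ with $f(w)=C$ is pendant and adjacent to $v_0$. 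The edges $uv$ and $v_0w$ are distinct (since $u\notin\{v_0,w\}$) and both carry the sumset $C$, contradicting the injectivity of $f^+$ that is built into the TIASSL concept (cf.\ the remark immediately after the definition stating that a TIASSL is an IASI which is both a TIASL and an IASSL). Hence no non-$\{0\}$ label is a neighbour of $u$, and connectivity of $G$ then forces $uv_0\in E(G)$.

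Applying the claim to each $A\in\mathcal{P}(X)-\{\emptyset,\{0\}\}$ makes every non-$\{0\}$ vertex a leaf attached to $v_0$, so $G\cong K_{1,2^{|X|}-2}$. The step I expect to be the main obstacle is the justification of $f^+$-injectivity, which must be extracted from the ``in other words'' remark identifying TIASSL with IASI rather than from the literal clause of the definition; once that is granted, the induction runs transparently because for $B\ne\{0\}$ the sumset $A+B$ always has a maximum strictly larger than $\max A$.
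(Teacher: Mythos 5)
Your proof is correct, and in the necessity direction it takes a genuinely different route from the paper. The paper argues by counting: injectivity of $f^+$ (available because a TIASSL is, per the remark after the definition, an IASI) together with the fact that $\{0\}$ cannot be an edge label gives $|E(G)|\le 2^{|X|}-2=|V(G)|-1$, connectivity then forces $G$ to be a tree with $f^+(E(G))=\mathcal{P}(X)-\{\emptyset,\{0\}\}$, and the paper concludes with the assertion that $f(V(G))=f^+(E(G))\cup\{\{0\}\}$ ``is possible only when $G\cong K_{1,2^{|X|}-2}$'' without spelling out why. Your downward induction on $\max f(u)$ supplies exactly the missing justification for that last step: it generalises Proposition~\ref{P-TIASI2} from the top layer $\max f(u)=\max X$ to every layer, using the observation that an edge $uv$ with $f(v)=B\ne\{0\}$ would duplicate the set-label $f(u)+B$ already carried by the forced pendant edge at the vertex labelled $f(u)+B$, contradicting injectivity of $f^+$. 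So your argument is more detailed and, in effect, proves what the paper only asserts; its only cost is that it leans explicitly (and correctly, as you note) on the identification of a TIASSL with an IASI to obtain injectivity of $f^+$ --- the same hypothesis the paper uses implicitly when it writes $|f^+(E(G))|=2^{|X|}-2$. Your sufficiency direction is essentially the paper's, except that you verify the labelling directly rather than routing it through Theorem~\ref{T-IASSL2}.
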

\begin{proof}
	Let $G\cong K_{1,2^{|X|}-2}$. Then, $G$ can be a graphical realisation of the collection $\mathcal{P}(X)-\emptyset$. Therefore, the corresponding IASI $f$ is a Top-IASGL of $G$. Then, by Theorem \ref{T-IASSL2}, $f$ is a TIASSL.
	
	Conversely, assume that $G$ admits a TIASSL, say $f$, with respect to the discrete topology of a non-empty finite set $X$. Therefore, $|f(V(G))|=2^{|X|}-1$ and $|f^+(E(G))|=2^{|X|}-2$. Therefore, $G$ is a tree on $2^{|X|}-1$ vertices. 	Moreover, $f(V(G))=f^+(E(G))\cup \{0\}$, which possible only when $G\cong K_{1,2^{|X|}-2}$.
\end{proof}

\noindent Invoking the above theorem, we have

\begin{Theorem}
	Let $X$ be a non-empty finite set of non-negative integers, which contains $0$. Then, for a star graph $K_{1,2^{|X|}-2}$, the existence of a Top-IASGL is equivalent to the existence of a TIASSL, with respect to the ground set $X$.
\end{Theorem}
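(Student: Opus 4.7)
The plan is to prove the equivalence by establishing the two implications separately, leveraging the results already collected in the excerpt.

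For the forward implication ($\Rightarrow$), I simply observe that a Top-IASGL is by definition a TIASGL whose vertex set-labels together with $\emptyset$ form a topology on $X$. Hence Theorem \ref{T-IASSL2} applies verbatim: the same labeling $f$ that realises a Top-IASGL is automatically a TIASSL, since the topology condition carries over and the set-sequential condition $f(V)\cup f^+(E)=\mathcal{P}(X)\setminus\{\emptyset\}$ was already derived in the proof of Theorem \ref{T-IASSL2}. This direction requires no fresh work and holds for the star $K_{1,2^{|X|}-2}$ in particular.

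For the converse implication ($\Leftarrow$), suppose $f$ is a TIASSL of $K_{1,2^{|X|}-2}$ with respect to $X$. Since $f$ is injective on the $2^{|X|}-1$ vertices, $|f(V(G))|=2^{|X|}-1$, so $f(V(G))\cup\{\emptyset\}$ already contains $2^{|X|}$ distinct subsets of $X$ and must therefore coincide with the full power set $\mathcal{P}(X)$. In particular, both $\{0\}$ and $X$ appear as vertex labels and the induced topology is the discrete one. By Proposition \ref{P-TIASI2}, the vertex $w$ carrying label $X$ is a pendant adjacent to the vertex $u$ carrying label $\{0\}$; however, in the star every pendant has the central vertex as its only neighbour, forcing $u$ to be the centre. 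Consequently every edge of $G$ joins $u$ to some leaf $v$, and its sumset-label equals $\{0\}+f(v)=f(v)$. Therefore
\[
f^+(E(G))=f(V(G))\setminus\{\{0\}\}=\mathcal{P}(X)\setminus\{\emptyset,\{0\}\},
\]
which is precisely the defining condition of an integer additive set-graceful labeling. Combined with the topology condition, which is already satisfied, $f$ is a Top-IASGL of $K_{1,2^{|X|}-2}$.

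The only mildly delicate point is identifying the central vertex of the star as the one labeled $\{0\}$; this is forced by combining Proposition \ref{P-TIASI2} with the rigid neighbourhood structure of a star (pendants can only be adjacent to the centre). Once that identification is in place, every edge-label is automatically determined and the Top-IASGL condition drops out of a single set-complement calculation, so no computational obstacle remains.
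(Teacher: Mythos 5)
Your proof is correct, but it takes a genuinely different route from the paper's. The paper proves the theorem by exhibiting one explicit labeling --- centre labelled $\{0\}$, leaves labelled bijectively by the remaining non-empty subsets of $X$ --- and observing that this single labeling is simultaneously a Top-IASGL and a TIASSL, so both existence statements are simply true and the equivalence follows trivially. You instead prove the two implications separately: the forward one by invoking Theorem \ref{T-IASSL2}, and the converse by a structural argument in which injectivity on the $2^{|X|}-1$ vertices forces $f(V(G))=\mathcal{P}(X)-\{\emptyset\}$ (the discrete topology), Proposition \ref{P-TIASI2} forces the label $\{0\}$ onto the centre of the star, and every edge label then collapses to $\{0\}+f(v)=f(v)$, yielding $f^+(E(G))=\mathcal{P}(X)-\{\emptyset,\{0\}\}$. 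Your converse is the more substantive piece: it shows that \emph{every} TIASSL of $K_{1,2^{|X|}-2}$ is already a Top-IASGL, which is stronger than mere equivalence of existence and is essentially the argument underlying the paper's Theorem \ref{T-TIASSG3}. What you give up relative to the paper is the explicit construction: your two implications by themselves never establish that either labeling actually exists (the equivalence as literally stated does not require this, and existence follows anyway from the paper's construction). The only caveats are trivial boundary cases ($|X|=1$ gives an edgeless $K_{1,0}$, where everything holds vacuously), so there is no real gap.
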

\begin{proof}
Since $0\in X$, we have $\{0\} \in \mathcal{P}$. Now, label the central vertex of $K_{1,2^{|X|}-2}$ by $\{0\}$ and label its end vertices by the other non-empty subsets of $X$ in an injective manner. Therefore, obviously, we have $f(V(G))=\mathcal{P}(X)-\{\emptyset\}$ (and hence $f^{\ast}(G)=\mathcal{P}(X)-\{\emptyset\}$) and $f^+(E(G))=\mathcal{P}(X)-\{\emptyset, \{0\}\}$. This completes the proof.
\end{proof}

Now that the admissibility of a TIASSL by a graph with respect to the discrete topology of a given non-empty ground set $X$ has been discussed, we shall proceed to consider other topologies (containing the set $\{0\}$) of $X$ which are neither the discrete topology nor the indiscrete topology of $X$ as the set of the set-labels of vertices of $G$. In the following discussions, all the topologies we mention are neither discrete toplogies nor indiscrete toplogies of $X$.

The question that arises first about the existence of a graph $G$ that admits a TIASSL  with respect to any given finite set of non-negative integers. In the following result we establish the existence of a graph which admits a TIASL with respect to a given ground set having certain properties.

\begin{Theorem}
	Let $X$ be a finite set of non-negative integers containing $0$. Then, there exists a graph $G$ which admits an TIASSL with respect to the ground set $X$.
\end{Theorem}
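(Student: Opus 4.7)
The cleanest route is to invoke Theorem~\ref{T-TIASSG3}, which already exhibits an explicit graph admitting a TIASSL for every non-empty finite $X \subseteq \mathbb{N}_0$ with $|X|\ge 2$. First I would take $G = K_{1, 2^{|X|}-2}$, label its central vertex by $\{0\}$ and its $2^{|X|}-2$ pendant vertices bijectively by the remaining non-empty subsets of $X$. Because $\{0\} + A = A$ for every subset $A$ of $X$, each edge incident with the central vertex receives the set-label of its pendant endpoint; hence the vertex set-labels together with the edge set-labels exhaust $\mathcal{P}(X) - \{\emptyset\}$. Since $f(V(G)) = \mathcal{P}(X) - \{\emptyset\}$, the collection $f(V(G)) \cup \{\emptyset\} = \mathcal{P}(X)$ is trivially a topology on $X$, so $f$ is the required TIASSL.

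The remaining verifications are routine: $f$ is injective by construction, $f^+(E)\subseteq \mathcal{P}(X)$ is immediate from $\{0\}+A = A$, the topological condition on the vertex labels holds automatically because $f(V(G)) \cup \{\emptyset\} = \mathcal{P}(X)$, and the set-sequential condition $f^{\ast}(G) = \mathcal{P}(X) - \{\emptyset\}$ is precisely the sumset computation just described. All of this is essentially contained in the argument given for Theorem~\ref{T-TIASSG3}, so invoking that result directly is the shortest path.

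The one potential obstacle is that the paragraph immediately preceding the statement declares that the ambient topologies under subsequent consideration will be neither discrete nor indiscrete. If the present theorem is to be read under that restriction, the star construction above is inadmissible, and I would instead need to produce a topology $\mathcal{T}$ strictly between the indiscrete and discrete ones, together with a graph whose vertex labels form $\mathcal{T}-\{\emptyset\}$ and whose edge sumsets fill in $\mathcal{P}(X)-\mathcal{T}$. I expect this to be the genuine difficulty, because the sumset constraint $f(u)+f(v) \subseteq X$ forces the labels of adjacent vertices to consist of small elements, so most subsets containing the maximal element of $X$ must appear as vertex labels (pendant to $\{0\}$, by Remark~\ref{R-ITIASL1}) rather than as edge sumsets. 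A workable strategy would be to choose $\mathcal{T}$ so that $\{0\}$, $X$, and the singletons $\{x\}$ for all sufficiently small $x$ are open, and then argue that the pairwise sumsets of these small open labels cover enough of $\mathcal{P}(X)$ while the larger subsets are already present as vertex labels; here the combinatorial bookkeeping would dominate the proof.
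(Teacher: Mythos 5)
Your primary argument (reduce to the star $K_{1,2^{|X|}-2}$ with the discrete topology, via Theorem~\ref{T-TIASSG3}) is internally correct and does establish the statement exactly as it is worded. But you have correctly diagnosed your own problem: the paragraph immediately before this theorem declares that from that point on all topologies considered are neither discrete nor indiscrete, so the content the paper intends here is the existence of a TIASS-graph whose vertex-label family is a \emph{proper, non-trivial} topology on $X$. Under that standing hypothesis the star construction is inadmissible, and what you offer for the harder case is only a strategy sketch, not a proof. Relative to the intended theorem, that is a genuine gap.

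For comparison, the paper's own proof is a direct construction that works for such a topology: start with a vertex $v_1$ labeled $\{0\}$; attach to $v_1$ one pendant vertex for each subset of $X$ that is not a non-trivial sumset of two subsets of $X$ (these sets can never be edge labels, so they are forced to be vertex labels, and the edge to $v_1$ reproduces the same set since $\{0\}+A=A$); add a pendant vertex labeled $X$ adjacent to $v_1$; then, for every remaining subset $A'$ of $X$ (necessarily a non-trivial sumset), write $A'=f(v_i)+f(v_j)$ with indecomposable summands already present as vertex labels and insert the edge $v_iv_j$. This realises every non-empty subset of $X$ as a vertex or edge label while keeping the vertex-label family a chosen topology $\mathcal{T}$ on $X$. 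Your instinct about where the difficulty lies (subsets containing the maximal element being forced onto pendant vertices at $\{0\}$) is consistent with this construction, but you would still need to carry out the bookkeeping — in particular, to verify that the family of forced vertex labels together with $\{0\}$ and $X$ can be completed to a topology whose complement in $\mathcal{P}(X)-\{\emptyset\}$ consists entirely of non-trivial sumsets of vertex labels — to close the argument.
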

\begin{proof}
	Let $X$ be a given non-empty finite set containing the element $0$. Let us begin the construction of the required graph as follows. First, choose a topology $\mathcal{T}$ on $X$ such that the non-empty sets in $\mathcal{T}$ together with their sumsets makes the power set $\mathcal{P}(X)$. 
	
	Take a vertex $v_1$ and label this vertex with the set $\{0\}$. Let $A_i\neq \{0\}$ be a subset of $X$ which is not the non-trivial sumset of any two subsets of $X$. Then, create a new vertex $v_i$ and label $v_i$ by the set $A_i$. Join this vertex to $v_1$. Repeat this process until all the subsets of $X$ which are not the non-trivial sumsets of any subsets of $X$ are used for labeling some vertices of the graph under construction. Since $f(V(G))\cup\{\emptyset\}$ must be a topology on $X$, $X$ must belong to $f(V(G))$. Hence, make an additional vertex $v$ with the set-label $X$. This vertex must also be adjacent to $v_1$.
	
	Now that all subsets of $X$ which are not sumsets of any two subsets of $X$ have been used for labeling the vertices of the graph under construction the remaining subsets of $X$ that are not used for labeling are sumsets of some two subsets of $X$, both different from $\{0\}$.  If $A_r'$ is a non-trivial sumset, then we can choose its summands to be indecomposable as non-trivial sumsets set in $\mathcal{P}(X)$ such that $A_r'= f(v_i)+f(v_j)$, for some vertices $v_i,v_j$ of the graph. Hence, draw an edge $e_r$ between $v_i$ and $v_j$ so that $e_r$ has the set-label $A_r'$. Repeat this process until all other non-empty subsets of $G$ are also the set-labels of some edges in $G$. Clearly, under this labeling we get $f^{\ast}(G)=\mathcal{P}(X)-\{\emptyset\}$. Therefore, the graph $G$ we constructed now admits a TIASSL with respect to the given ground  set $X$.
\end{proof}

Figure \ref{fig:G-TIASSL1b} depicts the existence of a TIASS-graph with respect to a given non-empty finite set of non-negative integers.

\begin{figure}[h!]
	\centering
	\includegraphics[width=0.7\linewidth]{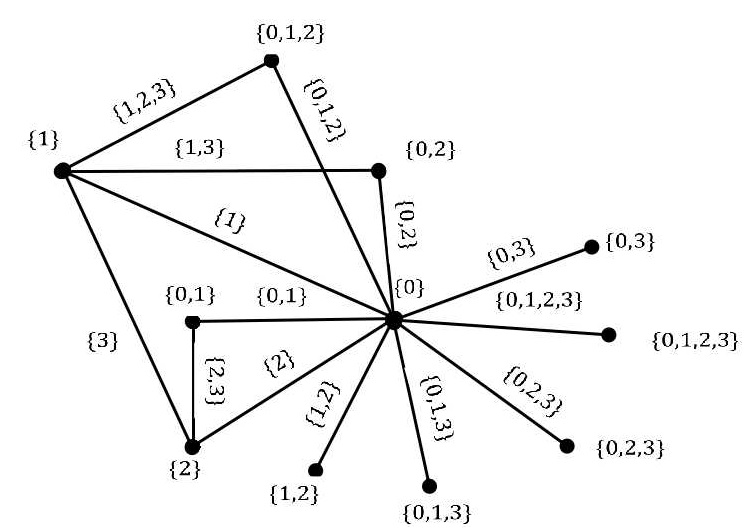}
	\caption{A TIASS-graph with respect to the ground set $X=\{0,1,2,3\}$.}
	\label{fig:G-TIASSL1b}
\end{figure}

In general, all graphs need not necessarily admit a TIASSL. It is evident from Proposition \ref{P-TIASI1}. In this context, it is relevant and interesting to determine the structural properties of TIASS-graphs. The minimum number of vertices required in a graph to admit a TIASS-graph is determined in the following result.

\begin{Proposition}\label{P-TIASSL1a}
	Let the graph $G$ admit a TIASSL $f$ with respect to a given ground set $X$ and let $\mathcal{A}$ be the set of subsets of $X$, which are not non-trivial sumsets of any subsets of $X$. If $|\mathcal{A}|=\rho$, then $G$ must have 
	\begin{enumerate}[itemsep=0mm]
		\item[(i)] at least $\rho$ vertices if $X \in \mathcal{A}$.
		\item[(ii)] at least $1+\rho$ vertices if $X \notin \mathcal{A}$.
	\end{enumerate}
\end{Proposition}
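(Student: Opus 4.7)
The plan is to prove that every set in $\mathcal{A}$ is forced to be a vertex label of $G$, after which the counting follows immediately from the injectivity of $f$ on $V(G)$ together with the topology requirement that $X\in f(V(G))$. The crucial observation is that if $A\in\mathcal{A}$ appears as an edge label, say $A=f(u)+f(v)$, then this is a sumset representation of $A$; and since $A$ admits no non-trivial sumset representation in $\mathcal{P}(X)$, we must have $\{f(u),f(v)\}=\{\{0\},A\}$. In particular one of the end vertices already carries the set-label $A$, so $A\in f(V(G))$ anyway.

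First I would invoke the TIASSL condition $f^{\ast}(G)=f(V(G))\cup f^+(E(G))=\mathcal{P}(X)-\{\emptyset\}$ to conclude that every non-empty $A\in\mathcal{A}$ lies in $f(V(G))\cup f^+(E(G))$. The observation above then shows $A\in f(V(G))$ in either case. The singular sub-case $A=\{0\}$ deserves a separate remark: $\{0\}$ cannot arise as an edge label at all, because $\{0\}=f(u)+f(v)$ would force $f(u)=f(v)=\{0\}$ (by equating the minimum and maximum elements on both sides), contradicting the injectivity of $f$; in any event, Remark~\ref{R-ITIASL1} already guarantees $\{0\}\in f(V(G))$.

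Having established $\mathcal{A}\subseteq f(V(G))$, the injectivity of $f$ on $V(G)$ gives $|V(G)|=|f(V(G))|\geq|\mathcal{A}|=\rho$, proving (i). For (ii), Remark~\ref{R-ITIASL1} forces $X\in f(V(G))$, and since $X\notin\mathcal{A}$ by hypothesis, the union $\mathcal{A}\cup\{X\}$ is a disjoint union contained in $f(V(G))$, yielding $|V(G)|\geq\rho+1$.

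The main obstacle I anticipate is pinning down the claim that a set in $\mathcal{A}$, if realised as an edge label, automatically appears as a vertex label. This is where one has to be meticulous about the exact definition of \emph{trivial} summands (the pair $\{\{0\},A\}$) and to verify that no other sumset representation exists for sets in $\mathcal{A}$, including the exceptional handling of $A=\{0\}$. Once that reduction is clean, the counting inequalities in (i) and (ii) are essentially bookkeeping.
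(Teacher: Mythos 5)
Your proposal is correct and follows essentially the same route as the paper: show that every member of $\mathcal{A}$ must occur as a vertex label, then count using the injectivity of $f$ and the fact that $X\in f(V(G))$. You are in fact slightly more careful than the paper at one point—the paper asserts outright that no element of $\mathcal{A}$ can be an edge label, whereas you correctly note that $A\in\mathcal{A}$ could still label an edge via the trivial representation $\{0\}+A$, in which case $A$ is nevertheless forced to appear as a vertex label, so the conclusion $\mathcal{A}\subseteq f(V(G))$ stands either way.
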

\begin{proof}
	Since $f$ is an IASI of $G$, the set-label of every edge of $G$ is the sumset of two subsets of $X$. Then, no element of $\mathcal{A}$ can be the set-label of any edge of $G$. Therefore, since $f$ is an IASSL of $G$, all the elements of $\mathcal{A}$ must be the set-labels of some vertices of $G$. 
	
	Since $f$ is a TIASL also, $X\in f(V)$. If $X\in \mathcal{A}$, then $G$ must have at least $\rho$ vertices. If $X \notin \mathcal{A}$, then $G$ must have one vertex corresponding to $X$ and one vertex corresponding to each element of $\mathcal{A}$. Therefore, in this case, $G$ must have at least $1+\rho$ vertices.
\end{proof}

In view of the above results, we note that the number of vertices adjacent to the vertex having the set-label $\{0\}$ is important in the study of TIASS-graphs. The following result describes the number of vertices that are adjacent to the vertex having the set-label $\{0\}$.

\begin{Proposition}\label{P-TIASSL1b}
	Let the graph $G$ admit a TIASSL $f$ with respect to a given ground set $X$. Then, $G$ has at least $\rho'$ vertices adjacent to the unique vertex of $G$, labeled by $\{0\}$, where $\rho'$ is the number of subsets of $X$ which are not the summands of any subsets of $X$.
\end{Proposition}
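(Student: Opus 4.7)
The plan is to parallel the strategy used for Proposition~\ref{P-TIASSL1a}: identify a collection of set-labels forced by the TIASSL condition, and then translate this into a lower bound on the number of neighbors of $v_0$. Let $\mathcal{B}$ denote the collection of subsets of $X$ that are not non-trivial summands of any subset of $X$, so that $|\mathcal{B}|=\rho'$. By Remark~\ref{R-ITIASL1} there is a unique vertex $v_0$ of $G$ with $f(v_0)=\{0\}$. I aim to show that each $A\in\mathcal{B}$ is realised as the set-label of a distinct vertex of $G$ adjacent to $v_0$, which yields $\deg(v_0)\geq\rho'$.

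First, I would argue that for each $A\in\mathcal{B}$ there is a vertex $v_A$ of $G$ with $f(v_A)=A$. Since $f$ is a TIASSL, $A$ lies in $f(V(G))\cup f^+(E(G))$. If $A$ were realised only as the label of an edge $uv$, then $A=f(u)+f(v)$; the trivial case ($f(u)=\{0\}$) immediately gives $A=f(v)\in f(V(G))$, so one is reduced to the case $f(u),f(v)\neq\{0\}$. In the spirit of the proof of Proposition~\ref{P-TIASSL1a}, I would argue that such a non-trivial decomposition, when combined with the requirement that $\mathcal{T}=f(V(G))\cup\{\emptyset\}$ be a topology on $X$, forces enough additional vertex-labels that $A$ itself must already lie in $f(V(G))$. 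Closure of $\mathcal{T}$ under finite intersection and arbitrary union is the key ingredient here.

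Second, having a vertex $v_A$ with $f(v_A)=A\in\mathcal{B}$, I would show $v_A\sim v_0$. Let $u$ be any neighbor of $v_A$; the edge $v_A u$ receives the set-label $A+f(u)\subseteq X$. If $f(u)\neq\{0\}$, then $A$ would appear as a non-trivial summand of the subset $A+f(u)$ of $X$, contradicting $A\in\mathcal{B}$. Therefore $f(u)=\{0\}$, forcing $u=v_0$. Since $G$ is connected, $v_A$ has at least one neighbor, so $v_A$ is indeed a pendant vertex adjacent to $v_0$.

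Combining the two steps, the correspondence $A\mapsto v_A$ is injective on $\mathcal{B}$ and produces $\rho'$ distinct neighbors of $v_0$, yielding the desired bound. The main obstacle will be the first step: the defining property of $\mathcal{B}$ constrains what $A$ can extend \emph{to} as a summand, not what $A$ can be the sumset \emph{of}. So excluding the scenario ``$A$ is realised only as an edge-label'' is not purely formal and appears to require the topology hypothesis in an essential way. I expect that the closure of $f(V(G))\cup\{\emptyset\}$ under finite intersection—which can force many subsets to appear as vertex-labels beyond those required by the IASSL condition alone—will be the tool that closes this gap.
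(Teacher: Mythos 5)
Your second step is sound and coincides with the paper's own argument: if a vertex $v_A$ with $f(v_A)=A\in\mathcal{B}$ had a neighbour $u$ with $f(u)\neq\{0\}$, then $A$ would be a non-trivial summand of the subset $A+f(u)$ of $X$, contradicting $A\in\mathcal{B}$; connectivity then makes $v_A$ a pendant neighbour of $v_0$. The genuine gap is your first step, which you correctly flag but do not close --- and which in fact cannot be closed. The topology axioms constrain $f(V(G))$ only under unions and intersections and say nothing about sumsets, so they cannot promote a set $A=C+D$ that is realised only as an edge label into a vertex label. The obstruction is real: take $X=\{0,1,2,3\}$ and let $f(V(G))$ consist of the seven non-empty subsets of $\{0,1,2\}$ together with $\{0,3\}$, $\{0,1,3\}$, $\{0,2,3\}$ and $X$; adjoining $\emptyset$ gives a topology on $X$, since every union or intersection of members either avoids $3$ or contains both $0$ and $3$. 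Join $v_0=v_{\{0\}}$ to the vertices labelled $X$, $\{0,3\}$, $\{0,1,3\}$, $\{0,2,3\}$ and $\{1\}$, and realise the four remaining subsets as edge labels only, via $\{3\}=\{1\}+\{2\}$, $\{1,3\}=\{1\}+\{0,2\}$, $\{2,3\}=\{2\}+\{0,1\}$ and $\{1,2,3\}=\{1\}+\{0,1,2\}$ (one further edge, say $v_{\{1\}}v_{\{1,2\}}$ with label $\{2,3\}$, keeps the graph connected). This is a TIASSL, yet $\mathcal{B}$ consists of the eight subsets of $X$ containing $3$, so $\rho'=8$ while $\deg(v_0)=5$.

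So the weak point is exactly where you located it, and the statement fails as literally written. For what it is worth, the paper's proof commits the same omission silently: it passes from ``a vertex labelled by an element of $\mathcal{B}$ can only be adjacent to $v_0$'' to ``$v_0$ has at least $\rho'$ neighbours'', which presupposes that every element of $\mathcal{B}$ labels a vertex. That presupposition is legitimate for the family $\mathcal{A}$ of Proposition \ref{P-TIASSL1a}, because a set that is not a non-trivial sumset can only be an edge label trivially, i.e., on an edge at $v_0$ whose other endpoint already carries that label; but ``not a summand'' does not preclude ``is a sumset'', so the analogous reduction fails for $\mathcal{B}$. What your two steps do establish correctly is the bound with $\mathcal{A}\cap\mathcal{B}$ in place of $\mathcal{B}$, which is essentially Proposition \ref{P-TIASSL1c}.
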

\begin{proof}
	Let $f$ be an IASSL of a given graph $G$ with respect to the ground set $X$. Let $\mathcal{B}$ be the collection of all subsets of $X$ which are not the summands of any subsets of $X$. That is, no subset of $X$ can be expressed as a sumset of any two elements in $\mathcal{B}$. Hence $|\mathcal{B}|=\rho'$. Since no element of $\mathcal{B}$ is a summand of any subset of $X$, then no vertex in $G$ which has the set-label from $\mathcal{B}$ can be adjacent only to the vertex of $G$ having the set-label $\{0\}$. Therefore, the minimum number of vertices adjacent to the vertex with set-label $\{0\}$ is $\rho'$. 
\end{proof}

Since $f$ is a TIASL, by Remark \ref{R-ITIASL1}, $G$ must have some pendant vertices. The minimum number of pendant vertices required for a graph $G$ to admit a TIASSL is determined in the following result.

\begin{Proposition}\label{P-TIASSL1c}
	Assume that the graph $G$ admits a TIASSL $f$ with respect to a given ground set $X$ and let $\mathcal{D}$ be the set of subsets of $X$ which are neither the non-trivial sumsets of two subsets of $X$ nor the summands of any subsets of $X$. If $|\mathcal{D}|=\rho''$, then $G$ must have 
	\begin{enumerate}[itemsep=0mm]
		\item[(i)] at least $\rho''$ pendant vertices if $X \in \mathcal{D}$.
		\item[(ii)] at least $1+\rho''$ pendant vertices if $X \notin \mathcal{D}$.
	\end{enumerate}
\end{Proposition}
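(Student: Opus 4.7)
The plan is to associate to each element of $\mathcal{D}$ a distinct pendant vertex of $G$, and then to account separately for the vertex labeled by $X$, which is always a pendant by Proposition~\ref{P-TIASI2} but may or may not itself correspond to an element of $\mathcal{D}$.

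First I would fix $A \in \mathcal{D}$ and argue that $A$ must occur as a vertex label. Since $f$ is an IASSL, $A \in f^{\ast}(G) = f(V(G)) \cup f^+(E(G))$. If $A$ occurred only as an edge label, we would have $A = f(u)+f(v)$ for some edge $uv$; but $A$ admits no non-trivial sumset representation, so the pair $\{f(u),f(v)\}$ must consist of the trivial summands $\{0\}$ and $A$, which already places $A$ in $f(V(G))$. Hence there is a unique vertex $v_A$ with $f(v_A)=A$.

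Next I would show that $v_A$ is pendant and adjacent to the vertex carrying the label $\{0\}$. For any neighbor $w$ of $v_A$, the sumset $A+f(w)$ is an edge label and so lies in $\mathcal{P}(X)$. Since $A$ is not a non-trivial summand of any subset of $X$, the only possibility is $f(w)=\{0\}$. Connectedness of $G$ forces $\deg(v_A)\geq 1$, so in fact $\deg(v_A)=1$. By injectivity of $f$ on vertices, distinct elements of $\mathcal{D}$ yield distinct pendants, giving $\rho''$ of them.

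Finally I would invoke Remark~\ref{R-ITIASL1} to note that $X\in f(V(G))$ and that the vertex $v_X$ with $f(v_X)=X$ is pendant (this is also the content of Proposition~\ref{P-TIASI2} applied to the maximal element of $X$). In case (i), $v_X$ has already been counted among the $\rho''$ pendants arising from $\mathcal{D}$, giving the bound $\rho''$; in case (ii), $v_X$ lies outside $\mathcal{D}$ and adds an extra pendant, giving $1+\rho''$. The main delicate point is the first step: the non-trivial-sumset clause in the definition of $\mathcal{D}$ is exactly what forces any edge-label occurrence of $A$ to descend to a vertex-label occurrence, and it is this bridge that makes the pendant count available.
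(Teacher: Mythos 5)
Your proof is correct and follows essentially the same route as the paper: each element of $\mathcal{D}$ must label a vertex, that vertex can only be adjacent to the vertex labeled $\{0\}$ (hence is pendant), and the vertex labeled $X$ is handled separately according to whether $X\in\mathcal{D}$. Your first step is in fact slightly more careful than the paper's, which simply asserts via Proposition~\ref{P-TIASSL1a} that elements of $\mathcal{D}$ cannot be edge labels, whereas you explicitly note that a trivial sumset representation $\{0\}+A$ of an edge label already forces $A$ into $f(V(G))$.
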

\begin{proof}
	 By Proposition \ref{P-TIASSL1a}, no element of $\mathcal{D}$ can be the set-label of any edge of $G$. Also, since no element of $\mathcal{D}$ is a summand of any subset of $X$, no vertex in $G$ that has the set-label from of $\mathcal{D}$ can be adjacent to any other vertex in $G$ other than the one having the set-label $\{0\}$. Therefore, no element of $\mathcal{D}$ can be the set-label of an element of $G$ that is not a pendant vertex of $G$. 
	
	Clearly, $X$ is not a summand of any subset of $X$. Then, $X\in \mathcal{D}$ if and only $X$ is not the sumset of any subsets of $X$. Therefore, if $X\in \mathcal{D}$, then the minimum number of pendant vertices in $G$ is at least $\rho''$. 
	
	If $X\notin \mathcal{D}$, then $X$ is a non-trivial sumset of two subsets of $X$. But, since $f$ is a TIASL, $X\in f(V)$. Therefore, the vertex of $G$ whose set-label is $X$, can be adjacent only to the vertex of $G$ having the set-label $\{0\}$.  Therefore, in addition to the vertices with set-labels from $\mathcal{D}$, the vertex with set-label $X$ 	is also a pendant vertex. Hence, in this case, the number of pendant vertices in $G$ is at least $1+\rho''$.
\end{proof}

The number of edges in the given graph $G$ is also important in determining the admissibility of a TIASSL by $G$. Since the minimum number of vertices required in a TIASS-graph have already been determined, we can determine the maximum number of edges required in $G$. 

\vspace{0.2cm}

\noindent Invoking the above results and concepts, we propose the following theorem. 

\begin{Proposition}
	Let the graph $G$ admit a TIASSL $f$ with respect to a given ground set $X$. Let $\mathcal{A}$ be the set of subsets of $X$, which are not the sumsets of any subsets of $X$ and $\mathcal{B}$ be the collection of all subsets of $X$ which are not the summands of any subsets of $X$. Also, let $|\mathcal{A}|=\rho$ and $|\mathcal{B}|=\rho'$. Then, $G$ has at most $2^{|X|}+\rho'-\rho-1$ edges.
\end{Proposition}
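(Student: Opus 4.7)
The plan is to convert the desired edge bound into a set-theoretic counting problem. Since $f$ is a TIASSL (and in particular an IASI), both $f$ and $f^+$ are injective, so the IASSL coverage $f(V) \cup f^+(E) = \mathcal{P}(X) - \{\emptyset\}$ combined with inclusion--exclusion yields
\begin{equation*}
|V| + |E| = (2^{|X|} - 1) + |f(V) \cap f^+(E)|,
\end{equation*}
equivalently $|E| = (2^{|X|} - 1) - |f(V) \setminus f^+(E)|$. Hence the claimed bound is equivalent to
\begin{equation*}
|f(V) \setminus f^+(E)| \geq \rho - \rho',
\end{equation*}
which is what I would prove.

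I would then assemble at least $\rho - \rho'$ vertex labels that cannot appear as edge labels. First, $\{0\} \in f(V)$ by Remark \ref{R-ITIASL1}, yet $\{0\} \notin f^+(E)$, since $f(u) + f(v) = \{0\}$ forces $f(u) = f(v) = \{0\}$, contradicting the injectivity of $f$. Second, by the argument of Proposition \ref{P-TIASSL1a}, every $A \in \mathcal{A}$ must lie in $f(V)$; moreover, for $A \in \mathcal{A} \setminus \{\{0\}\}$, the only way $A$ can arise as an edge label is through the trivial sumset $\{0\} + A$, i.e.\ as the label of an edge between $v_0$ (the unique vertex labelled $\{0\}$) and the vertex $v_A$ carrying label $A$. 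Consequently, every such $A$ whose vertex $v_A$ is not a neighbour of $v_0$ also belongs to $f(V) \setminus f^+(E)$.

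The final step is to bound the number of $A \in \mathcal{A} \setminus \{\{0\}\}$ with $v_A \sim v_0$ above by $\rho'$; together with $\{0\}$, this produces at least $1 + (\rho - 1 - \rho') = \rho - \rho'$ elements in $f(V) \setminus f^+(E)$, and substituting back yields $|E| \leq (2^{|X|} - 1) - (\rho - \rho') = 2^{|X|} + \rho' - \rho - 1$. This count is where the principal obstacle sits: Proposition \ref{P-TIASSL1b} guarantees at least $\rho'$ neighbours of $v_0$ (namely the vertices carrying labels from $\mathcal{B}$), but does not directly limit the number of additional neighbours. The argument here should exploit that any extra $\mathcal{A}$-labelled neighbour of $v_0$ must have its label in $\mathcal{A} \setminus \mathcal{B}$ (and hence be a non-trivial summand supporting non-$v_0$ edges), and then use the coverage identity $f(V) \cup f^+(E) = \mathcal{P}(X) - \{\emptyset\}$ together with the injectivity of $f^+$ to set up an injection into $\mathcal{B}$ that caps the count at $\rho'$.
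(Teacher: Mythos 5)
Your reduction is clean and genuinely different from the paper's route: the paper counts edges directly (the $\rho'$ edges joining the vertex labelled $\{0\}$ to the vertices labelled by elements of $\mathcal{B}$, plus at most $|\mathcal{A}'|=2^{|X|}-\rho-1$ further edges whose labels must be non-trivial sumsets), whereas you pass to the complementary quantity $|f(V)\setminus f^+(E)|$ via the coverage identity. Your first three steps are correct: the identity $|E|=2^{|X|}-1-|f(V)\setminus f^+(E)|$ holds because $f^+$ is injective, $\{0\}$ can never be an edge label in a simple graph with injective $f$, and a set $A\in\mathcal{A}\setminus\{\{0\}\}$ can only arise as an edge label through the edge $v_0v_A$.

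However, the proof is not complete. The entire weight of the proposition rests on the inequality you defer to your final paragraph --- that at most $\rho'$ elements of $\mathcal{A}\setminus\{\{0\}\}$ label neighbours of $v_0$ --- and you do not prove it; you only describe a hoped-for strategy (``set up an injection into $\mathcal{B}$''). That strategy has no evident realisation: the vertices that are \emph{forced} to be adjacent only to $v_0$ are those labelled by elements of $\mathcal{B}$, but nothing prevents a vertex labelled by an element of $\mathcal{A}\setminus\mathcal{B}$ (a non-sumset that \emph{is} a non-trivial summand) from also being adjacent to $v_0$, and such a vertex contributes an edge whose label lies in neither $\mathcal{A}'$ nor $\mathcal{B}$. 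Without a genuine injection, the number of $\mathcal{A}$-labelled neighbours of $v_0$ is a priori bounded only by $\rho-1$, which yields merely the trivial estimate $|E|\le 2^{|X|}-2$ rather than the claimed $2^{|X|}+\rho'-\rho-1$. Until this step is supplied the argument is a reformulation, not a proof. (It is worth noting that the paper's own proof silently passes over exactly the same configuration --- an edge from $v_0$ to a vertex labelled by an element of $\mathcal{A}\setminus\mathcal{B}$ fits into neither of its two counting buckets --- so your reduction at least has the merit of isolating precisely the inequality that needs to be established.)
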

\begin{proof}
	By Proposition \ref{P-TIASSL1b}, every element of $\mathcal{B}$ is the set-label of a vertex of $G$ which is adjacent to the vertex having the set-label $\{0\}$. Therefore, every element $B_i$ of $\mathcal{B}$ is the set-label of one vertex of $G$ and the edge between that vertex and the vertex with the set-label $\{0\}$. Since the set-label of every edge is the sumset of the set-labels of its end vertices, the maximum number of remaining edges in $G$ is the number of subsets of $X$ which are the sumsets two subsets of $X$. 
	
	\noindent {\em Case-1:} Let $X \notin \mathcal{A}$. Recall that $X \in \mathcal{B}$.  Then, the maximum number of edges is equal to the number of elements in the set $\mathcal{A}'$, which is the set of all (non-empty) subsets of $X$ which are non-trivial sumsets of  two subsets of $X$. We have $\mathcal{A}'= \mathcal{P}(X)-(\mathcal{A}\cup \{0\})$. Hence, maximum number of remaining edges in $G$ is $2^{|X|}-(|\mathcal{A}|+1)=2^{|X|}-\rho-1$. Therefore, in this case, the maximum number of edges in $G$ is $2^{|X|}+\rho'-\rho-1$.
	
	\noindent {\em Case-2:} Let $X \in \mathcal{A}$. Then, by Proposition \ref{P-TIASSL1b}, $G$ has at least $1+\rho'$ vertices adjacent to a unique vertex, having set-label$\{0\}$. Therefore, there are at least $1+\rho'$ edges in $G$ with set-labels from $\mathcal{B}\cup X$. The maximum number of remaining edges is equal to the number of subsets of $X$, other than $X$, which are the sumsets of any two subsets of $X$. Therefore, the maximum number of remaining edges in $G$ is $2^{|X|}-|\mathcal{A}|-2=2^{|X|}-\rho-2$. Hence,  the maximum number of edges in $G$ in this case is $1+\rho'+2^{|X|}-\rho-2=2^{|X|}+\rho'-\rho-1$.
\end{proof}

\noindent Let us summarise the above propositions in the following theorem.  

\begin{Theorem}\label{T-IASSL1}
	Let $f$ be a topological integer additive set-sequential labeling defined on a graph with respect to a given ground set $X$ containing $0$. Let $\mathcal{A}$ be the set of all subsets of $X$ which are not the non-trivial sumsets of any two subsets of $X$ and $\mathcal{B}$ be the the set of all subsets of $X$, which are non-trivial summands in some subsets of $X$. Then,
	\begin{enumerate}\itemsep0mm
		\item[(i)] the number of vertices in $G$ is at least $|\mathcal{A}|$ or $|\mathcal{A}|+1$ in accordance with whether $X$ is in $\mathcal{A}$ or not,
		\item[(ii)] the maximum number of edges in $G$ is $|\mathcal{B}|+|\mathcal{A}'|$, where $\mathcal{A}'= \mathcal{P}(X)-(\mathcal{A}\cup \{\emptyset\})$,
		\item[(iii)] the minimum degree of the vertex that has the set-label $\{0\}$ in $G$ is  $|\mathcal{B}|$,
		\item[(iv)] the minimum number of pendant vertices is $|\mathcal{A}\cap \mathcal{B}|$ or  $|\mathcal{A}\cap \mathcal{B}|+1$ in accordance with whether or not $X$ is in $\mathcal{A}$.
	\end{enumerate}
\end{Theorem}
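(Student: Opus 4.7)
The plan is to treat Theorem \ref{T-IASSL1} as a consolidated summary and to derive each of its four parts directly from the propositions already established in this section; essentially no new combinatorics is needed, so the work reduces to citing the right proposition and reconciling notation.

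For part (i), I would quote Proposition \ref{P-TIASSL1a} verbatim: since $f$ is an IASI, every edge set-label is the sumset of the set-labels of its endpoints, so no element of $\mathcal{A}$ can appear on an edge; hence every element of $\mathcal{A}$ must be the set-label of a vertex. When $X\in\mathcal{A}$ this alone forces at least $|\mathcal{A}|$ vertices; when $X\notin\mathcal{A}$, the topological requirement $X\in f(V)$ contributes one additional vertex, giving $|\mathcal{A}|+1$. Part (iii) is just Proposition \ref{P-TIASSL1b}: each element of $\mathcal{B}$ is not a summand of any subset of $X$, so the vertex carrying such a set-label can only be joined to the vertex labeled $\{0\}$; the $|\mathcal{B}|$ such vertices therefore all lie in the neighborhood of $v_{\{0\}}$, which is precisely the asserted minimum degree. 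Part (iv) follows similarly from Proposition \ref{P-TIASSL1c}, once I observe that the set $\mathcal{D}$ used there (subsets that are neither non-trivial sumsets nor summands) is exactly $\mathcal{A}\cap\mathcal{B}$ in the notation of the present theorem; the two cases $X\in\mathcal{A}$ versus $X\notin\mathcal{A}$ then match the two cases $X\in\mathcal{D}$ versus $X\notin\mathcal{D}$ of that proposition.

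The only step that requires a tiny bit of arithmetic is part (ii). From the preceding proposition the maximum number of edges is $2^{|X|}+\rho'-\rho-1$, while the present theorem states the bound as $|\mathcal{B}|+|\mathcal{A}'|$ with $\mathcal{A}'=\mathcal{P}(X)-(\mathcal{A}\cup\{\emptyset\})$. I would check
\[
|\mathcal{A}'| \;=\; 2^{|X|}-|\mathcal{A}|-1 \;=\; 2^{|X|}-\rho-1,
\]
so that $|\mathcal{B}|+|\mathcal{A}'|=\rho'+2^{|X|}-\rho-1$, matching the earlier bound exactly. Conceptually, this says that the available edges decompose into those whose set-label lies in $\mathcal{B}$ (each forced to be incident with $v_{\{0\}}$) and those whose set-label is a non-trivial sumset in $\mathcal{A}'$.

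Since every part is a direct restatement of, or a one-line algebraic rearrangement of, a result already proved, there is no substantial obstacle. The main point to be careful about is the notational reconciliation: verifying that $\mathcal{A}, \mathcal{B}, \mathcal{D}$ appearing in Propositions \ref{P-TIASSL1a}--\ref{P-TIASSL1c} are consistent with the $\mathcal{A}, \mathcal{B}$ used in the theorem, and that $\mathcal{D}$ is correctly identified with $\mathcal{A}\cap\mathcal{B}$. Once that bookkeeping is done, the proof is essentially a four-line catalog of citations.
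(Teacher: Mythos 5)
Your proposal matches the paper exactly: the paper gives no separate proof of Theorem \ref{T-IASSL1}, introducing it only with the words ``Let us summarise the above propositions in the following theorem,'' so assembling it by citation to Propositions \ref{P-TIASSL1a}--\ref{P-TIASSL1c} and the edge-count proposition, together with the arithmetic $|\mathcal{A}'|=2^{|X|}-\rho-1$, is precisely the intended argument. One caution on the bookkeeping you flag: the theorem's wording of $\mathcal{B}$ (``non-trivial summands in some subsets of $X$'') is inconsistent with the definition actually used in Proposition \ref{P-TIASSL1b} (``not the summands of any subsets of $X$''); your proof correctly adopts the latter reading, which is the one under which $\mathcal{D}=\mathcal{A}\cap\mathcal{B}$ and parts (iii)--(iv) go through.
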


Now, let us verify whether a graph admits a Top-IASI with respect to a given non-empty finite set $X$. 

\begin{Proposition}
	No connected graph admits a topological integer additive set-sequential indexer.
\end{Proposition}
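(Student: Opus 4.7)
My plan is to exploit the interaction between the required label $\{0\}$ on some vertex and the strict injectivity of the extended function $f^{\ast}$ demanded by the indexer condition. The crucial observation is that the vertex carrying the label $\{0\}$ acts like a neutral element for sumsets, so any edge incident to it inevitably duplicates a vertex label.

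First, I would unpack the definition of a TIASSI. Since $f^{\ast}$ is required to be injective on $V(G)\cup E(G)$, we must have $f(V(G))\cap f^{+}(E(G))=\emptyset$. This is the structural constraint I aim to contradict. Next, I would invoke the TIASL part of the hypothesis together with Remark~\ref{R-ITIASL1} to conclude that $\{0\}\in f(V(G))$; let $v_{0}$ denote the unique vertex with $f(v_{0})=\{0\}$. (If one prefers, Proposition~\ref{P-TIASSL1b} gives an even stronger statement: $v_{0}$ has at least $\rho'\ge 1$ neighbours.)

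Now, because $G$ is connected and non-trivial, $v_{0}$ has at least one neighbour, say $v$, with $f(v)=A$ for some $A\in\mathcal{P}(X)\setminus\{\emptyset\}$. A direct sumset computation gives
\[
    f^{+}(v_{0}v) \;=\; f(v_{0})+f(v) \;=\; \{0\}+A \;=\; A \;=\; f(v).
\]
Hence $A$ belongs simultaneously to $f(V(G))$ and $f^{+}(E(G))$, contradicting the disjointness of images derived in the first step. Therefore $f^{\ast}$ cannot be injective, and no connected graph admits a TIASSI.

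There is no real obstacle here beyond correctly quoting the earlier results; the whole argument is driven by the trivial sumset identity $\{0\}+A=A$. The only subtlety worth noting explicitly in the write-up is the need for $G$ to have at least two vertices so that $v_{0}$ actually has a neighbour — which is automatic under the paper's standing assumption that all graphs under consideration are non-trivial and connected.
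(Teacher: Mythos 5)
Your proof is correct and follows essentially the same route as the paper's: both arguments reduce to the trivial sumset identity $\{0\}+A=A$ forcing an edge incident to the $\{0\}$-labelled vertex to share its label with the other endpoint, contradicting the injectivity of $f^{\ast}$. The only cosmetic difference is that the paper exhibits the offending edge as the one joining the $\{0\}$-vertex to the pendant vertex labelled $X$, whereas you take an arbitrary neighbour of the $\{0\}$-vertex, which exists by connectedness and non-triviality.
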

\begin{proof}
	If possible, let $f$ be a TIASSI defined on a given connected graph $G$. Then, $f$ is a TIASL on $G$. Then, $X\in f(V)$. Then, the vertex $v$ having set-label $X$ must be a pendant vertex and it must be adjacent to the vertex $u$ having the set-label $\{0\}$. Therefore, $f^*(v)=f(v)=f^+(uv)=f^*(uv)$, a contradiction to the assumption that $f$ is an TIASSI of $G$. Hence, $f$ can not be a TIASSI of the graph $G$. 
\end{proof}

Analogous to the Theorem \ref{T-IASSL1}, we propose the characterisation of a (disconnected) graph that admits a TIASSI with respect to a given ground set $X$.

\begin{Theorem}\label{T-IASSI1}
	Let $f$ be a topological integer additive set-sequential indexer defined on a graph with respect to a given ground set $X$ containing $0$. Let $\mathcal{A}$ be the set of all subsets of $X$ which are not the non-trivial sumsets of any two subsets of $X$ and $\mathcal{B}$ be the the set of all subsets of $X$, which are non-trivial summands of any subsets of $X$. Then,
	\begin{enumerate}[itemsep=0mm]
		\item[(i)] the minimum number of vertices in $G$ is $|\mathcal{A}|$ or $|\mathcal{A}|+1$ in accordance with whether $X$ is in $\mathcal{A}$ or not,
		\item[(ii)] the minimum number of isolated vertices in $G$ is $|\mathcal{A}\cap \mathcal{B}|$ or  $|\mathcal{A}\cap \mathcal{B}|+1$ in accordance with whether $X$ is in $\mathcal{A}$ or not,
		\item[(iii)] the maximum number of edges in $G$ is $|\mathcal{A}'|$, where $\mathcal{A}'= \mathcal{P}(X)-(\mathcal{A}\cup \{\emptyset\})$.
	\end{enumerate}
\end{Theorem}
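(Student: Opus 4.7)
The plan is to mirror the three-part argument of Theorem \ref{T-IASSL1}, at each step incorporating the extra injectivity condition on $f^*$ inherent in the definition of a TIASSI; equivalently, $f(V(G))$ and $f^+(E(G))$ are disjoint and together with $\{\emptyset\}$ partition $\mathcal{P}(X)$. Throughout I would exploit the fact (from the preceding proposition and Remark \ref{R-ITIASL1}) that the vertex $u_0$ carrying $\{0\}$ must be isolated in a TIASSI, since $f^+(u_0v) = f(v)$ along any incident edge would violate injectivity of $f^*$.

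For part (i), I would recycle the argument used in Proposition \ref{P-TIASSL1a}: every element of $\mathcal{A}$ fails to be a non-trivial sumset and therefore cannot label any edge, so each must appear among the vertex labels. Combined with the topology condition $X \in f(V(G))$, this yields the stated lower bound, with the $+1$ appearing precisely when $X \notin \mathcal{A}$.

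For part (ii), I would show that any vertex $v$ with $f(v) = A \in \mathcal{A}\cap \mathcal{B}$ must be isolated. Indeed, a neighbour $u$ with label $C$ would force $A + C \subseteq X$; since $A \in \mathcal{B}$ is not a non-trivial summand of any subset of $X$, this compels $C = \{0\}$, whence $f^+(uv) = A = f(v)$ contradicts injectivity of $f^*$. When $X \in \mathcal{A}$ this already accounts for $|\mathcal{A}\cap \mathcal{B}|$ isolated vertices. When $X \notin \mathcal{A}$, the vertex $w$ with $f(w) = X$ is not in the previous count, but the same argument (using that $X$ is never a non-trivial summand of any subset of $X$) shows $w$ must also be isolated, producing the extra $+1$.

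For part (iii), injectivity of $f^*$ together with the sequentiality condition gives $|V(G)| + |E(G)| = 2^{|X|} - 1$, so $|E(G)|$ is maximised exactly when $|V(G)|$ is minimised. Substituting the lower bound from (i) and rewriting against $|\mathcal{A}'| = 2^{|X|} - 1 - |\mathcal{A}|$ yields the stated upper bound. The main subtle point will be the case $X \notin \mathcal{A}$: here $X \in \mathcal{A}'$ is already committed to being a vertex label, so the bound $|\mathcal{A}'|$ cannot be attained with equality, but the weaker inequality $|E(G)| \le |\mathcal{A}'|$ (which is what the statement claims) still holds, and I would flag this explicitly so the count is honest in both cases.
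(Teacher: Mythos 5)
Your proposal is correct, and for parts (i) and (ii) it follows essentially the paper's own route: (i) is the argument of Proposition \ref{P-TIASSL1a} plus the requirement $X\in f(V(G))$, and (ii) is the paper's observation that a vertex labelled by $A\in\mathcal{A}\cap\mathcal{B}$ could only be joined to the vertex labelled $\{0\}$, which injectivity of $f^*$ forbids since the resulting edge would repeat the label $A$. You are in fact more careful than the paper in (ii): the paper's proof stops at the count $|\mathcal{A}\cap\mathcal{B}|$ and never justifies the extra $+1$ when $X\notin\mathcal{A}$, whereas you supply it by noting that $X$ is never a non-trivial summand of any subset of $X$, so the vertex labelled $X$ is forced to be isolated and lies outside $\mathcal{A}\cap\mathcal{B}$ exactly when $X\notin\mathcal{A}$.

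For part (iii) you take a genuinely different route. The paper bounds the edges directly: every edge label is a non-trivial sumset, hence lies in $\mathcal{A}'$, and injectivity of $f^+$ gives at most $|\mathcal{A}'|$ edges. You instead use the partition identity $|V(G)|+|E(G)|=2^{|X|}-1$ (valid because a TIASSI makes $f^*$ a bijection onto $\mathcal{P}(X)-\{\emptyset\}$) and minimise $|V(G)|$ via part (i). Both are sound; the paper's count is more self-contained, while yours makes the dependence of (iii) on (i) explicit and, as a bonus, detects that when $X\notin\mathcal{A}$ the bound $|\mathcal{A}'|$ cannot be attained (since $X\in\mathcal{A}'$ is already committed to a vertex, one gets $|E(G)|\le|\mathcal{A}'|-1$) --- a refinement the paper's statement and proof do not record. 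No gaps; the argument stands.
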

\begin{proof}
	\begin{enumerate}[itemsep=0mm]
		\item[(i)] The proof is similar to the proof of Proposition \ref{P-TIASSL1a}.
		\item[(ii)] As we have proved earlier any vertex having the set-label from $\mathcal{A}\cap \mathcal{B}$ can be adjacent only to the vertex having $\{0\}$. But, since no two elements of $G$ can have the same set-label, no element in $\mathcal{A}\cap \mathcal{B}$ can be adjacent to any other vertex of $G$. Therefore, the minimum number of isolated vertices in $G$ is $|\mathcal{A}\cap \mathcal{B}|$.
		\item[(iii)] Since all elements in $\mathcal{A}\cap \mathcal{B}$ corresponds to isolated vertices in $G$, the number of non-isolated vertices of $G$ is $|\mathcal{A}-\mathcal{B}|$. Therefore, all elements in $\mathcal{A}'$ are the sumsets of the elements in $|\mathcal{A}-\mathcal{B}|$. Therefore, the maximum number edges in $G$ is equal to $|\mathcal{A}'|=2^{|X|}-\rho-1$. 
	\end{enumerate}
\noindent This completes the proof.
\end{proof}

\section{Conclusion}

In this paper, we have discussed certain properties and characteristics of topological IASL-graphs. More properties and characteristics of TIASLs, both uniform and non-uniform, are yet to be investigated. The following are some problems which require further investigation. 

\begin{Problem}{\rm 
		Determine the smallest ground set $X$, with respect to which different graph  classes admit TIASSLs.}
\end{Problem}

\begin{Problem}{\rm
		Characterise the graphs which are TIASS-graphs, but not TIASG-graphs.}
\end{Problem}

\begin{Problem}{\rm 
		Check whether the converses of Theorem \ref{T-IASSL1} and Theorem \ref{T-IASSI1} are true.}
\end{Problem}

\begin{Problem}{\rm
		Check the admissibility of TIASSL by different operations and products of given TIASS-graphs.}
\end{Problem}

The problems of establishing the necessary and sufficient conditions for various graphs and graph classes to have certain other types IASLs are also still open. Studies of those IASLs which assign sets having specific properties, to the elements of a given graph are also noteworthy. 

%%%%%%%%%%%%%%%%%%%%%%%%%%%%%%%%%%%%%%%%%%

\acknowledgments{Acknowledgments}

The authors would like to thank the anonymous reviewers for their insightful suggestions and critical and constructive remarks which made the overall presentation of this paper better.

%%%%%%%%%%%%%%%%%%%%%%%%%%%%%%%%%%%%%%%%%%

\authorcontributions{Author Contributions}

All three authors have significant contribution to this paper and the final form of this paper is approved by all three authors.

%Required if more than one author. Authorship must include and be strictly limited to researchers who have substantially contributed to the reported work. Please carefully review our criteria regarding the Qualification for Authorship: \web /instructions.

%%%%%%%%%%%%%%%%%%%%%%%%%%%%%%%%%%%%%%%%%%

\conflictofinterests{Conflicts of Interest}

The authors declare that they have no conflict of interests regarding the publication of the article.

%Required. State any potential conflicts of interest here or ``The authors declare no conflict of interest''. 

%=================================================================
% References: Variant A
%=================================================================
% Back Matter (References and Notes)
%----------------------------------------------------------
% Style and layout of the references
%\bibliographystyle{mdpi}
%\makeatletter
%\renewcommand\@biblabel[1]{#1. }
%\makeatother

%\begin{thebibliography}{999} % if there are less than 10 entries, enter a one digit number

% Reference 1
%\bibitem{ref-journal}
%Lastname, F.; Author, T. The title of the cited article. {\em Journal Abbreviation} {\bf 2008}, {\em 10}, 142-149.

% Reference 2
%\bibitem{ref-book}
%Lastname, F.F.; Author, T. The title of the cited contribution. In {\em The Book Title}; Editor, F., Meditor, A., Eds.; Publishing House: City, Country, 2007; pp. 32-58.

%\end{thebibliography}

%=================================================================
% References:  Variant B
%=================================================================
% Use the following option to include external BibTeX files:
%\bibliography{lite}

\begin{thebibliography}{99} % if there are less than 10 entries, enter a one digit number

\bibitem{AGKS}  Acharya, B. D.;  Germina, K. A.;  Abhishek, K.; Slater, P. J. Some New Results on Set-Graceful and Set- Sequential Graphs. {\em Journal of Combinatorics, Information and System Sciences}, {\bf 2012},{\em 37},(2-4), 145-155.

\bibitem{AH} Acharya, B. D.; Hegde, S. M.  Set-Sequential Graphs. {\em National Academy of Science Letters}, {\bf 1985},{\em 8}(12), 387-390.

\bibitem{AGPR} Acharya, B. D.;  Germina, K. A.;  Princy, K. L.;  Rao, S. B. Embedding and NP-completeness of set-graceful, topologically set-graceful and set-sequentaial graphs. {\em Journal of Discrete Mathematical Sciences \& Cryptography}, {\bf 2009},{\em  12}(2), 481-487.

\bibitem {BM} Bondy, J. A.; Murty, U. S. R.  {\em Graph Theory}. Springer, {\bf 2008}.

\bibitem{BLS} Brandst\"{a}dt, A.;  Le, V. B.; Spinrad, J. P. {\em f Graph Classes: A Survey}. SIAM, Philadelphia, {\bf 1987}.

\bibitem {JAG} Gallian, J. A.  A Dynamic Survey of Graph Labelling. {\em The Electronic Journal of Combinatorics},  (\# DS-6), {\bf 2013}.

\bibitem {GA} Germina, K. A.;  Anandavally, T. M. K. Integer Additive Set-Indexers of a Graph: Sum Square Graphs. {\em Journal of Combinatorics, Information and System Sciences}, {\bf 2012},{\em 37}(2-4), 345-358.

\bibitem {GS1} Germina, K. A.; Sudev, N. K. On Weakly Uniform Integer Additive Set-Indexers of Graphs. {\em International Mathematical Forum}, {\bf 2013},{\em 8}(37), 1827-1834., DOI: 10.12988/imf.2013.310188

\bibitem{KDJ1} Joshy, K. D.  {\em Introduction to General Topology}. New Age International, {\bf 1983}.

\bibitem{VK} Krishnamoorthy, V. On the Number of Topologies of Finite Sets. {\em The American Mathematical Monthly}, {\bf 1966},{\em 73}(2), 154-157.

\bibitem {FH} Harary, F.  {\em  Graph Theory}. Addison-Wesley Publishing Company Inc., {\bf 1969}.

\bibitem{JRM} Munkers, J. R. {\em Topology}. Prentice Hall, {\bf 2000}.

\bibitem{GS0}  Sudev, N. K.; Germina, K. A. On Integer Additive Set-Indexers of Graphs. International Journal of Mathematical  Sciences \& Engineering Applications, {\bf 2014},{\em 8}(2), 11-22.

\bibitem {GS2} Sudev, N. K.; Germina, K. A. Some New Results on Strong Integer Additive Set-Indexers of Graphs. {\em Discrete Mathematics, Algorithms \& Applications}, {\bf 2015},{\em 7}(1), 1-11.,  DOI: 10.1142/S1793830914500657.

\bibitem{GS11}  Sudev, N. K.; Germina, K. A.; Chithra, K. P. A Creative Review on Integer Additive Set-Labeled Graphs.{\em  Asian-European Journal of Mathematics}, (2015), to appear.

\bibitem{GS12} Sudev, N. K.; Germina, K. A. The exquisite Integer Additive Set-Labeling of Graphs. {\em International Journal of Science and Research}, {\bf 2015},{\em 4}(3), 2858-2862.

\bibitem{GS13} Sudev, N. K.; Germina, K. A. A Study on Topological Integer Additive Set-Labeling of Graphs. {\em Electronic Journal of Graph Theory and Applications}, {\bf 2015},{\em 3}(1), 70–84, DOI: 10.5614/ejgta.2015.3.1.8.

\bibitem{GS14} Sudev, N. K.; Germina, K. A. A Study on Integer Additive Set-Graceful Graphs. {\em under review}.

\bibitem{GS15} Sudev, N. K.; Germina, K. A. On Integer Additive Set-Sequential Graphs. {\em under review}.

\bibitem{GS16} Sudev, N. K.; Germina, K. A. Topological Integer Additive Set-Graceful Graphs. {\em under review}.

\end{thebibliography}
\bibliographystyle{mdpi}
\makeatletter
\renewcommand\@biblabel[1]{#1. }
\makeatother

%%%%%%%%%%%%%%%%%%%%%%%%%%%%%%%%%%%%%%%%%%

%\abbreviations{Abbreviations/Nomenclature}
%
%Main text.

%%%%%%%%%%%%%%%%%%%%%%%%%%%%%%%%%%%%%%%%%%

%\appendix
%\section{Appendix Title}
%
%Main text.

\end{document}